%% Versione Elsevier

\documentclass[12pt]{amsart}

\usepackage{pifont,geometry}
\usepackage{amsmath,amssymb,amsthm}
\usepackage[latin1]{inputenc}
\usepackage[english]{babel}
\usepackage[pdftex,bookmarks,colorlinks]{hyperref}

\usepackage{graphicx}
\usepackage{enumerate}

\usepackage[x11names,rgb,table]{xcolor}
%\usepackage{tikz}
%\usetikzlibrary{arrows,shapes}
%\usepackage{color}

%-----------------------------------------------------------------
\newtheorem{theorem}{Theorem}[section]

\newtheorem{proposition}[theorem]{Proposition}
\newtheorem{lemma}[theorem]{Lemma}

\theoremstyle{definition}
\newtheorem{definition}[theorem]{Definition}

\newtheorem{remark}[theorem]{Remark}

\numberwithin{equation}{section}

\theoremstyle{theorem}

%----------------------------------------

\newcommand{\K}{K}
\newcommand{\PP}{\mathbb{P}}

\newcommand{\FQ}{\mathbb{F}_{q^2}}
\newcommand{\Fq}{\mathbb{F}_q}
\newcommand{\Fqs}{\mathbb{F}_{q^u}}

%----------------------------------------
\newcommand{\Tr}[3]{\mathrm{Tr}^{#1}_{#2}(#3)}

\newcommand{\N}[3]{\mathrm{N}^{#1}_{#2}(#3)}

%----------------------------------------

\newcommand{\He}{\mathcal{H}}
\newcommand{\Bmq}{\mathcal{B}_{m,q}}
\newcommand{\A}{\mathcal{A}}
\newcommand{\X}{\mathcal{X}}
\newcommand{\Y}{\mathcal{Y}}
\newcommand{\LL}{\mathcal{L}}

\begin{document}

\title{\normalfont{Higher Hamming weights for locally recoverable codes on algebraic curves}}

\author{Edoardo Ballico}
\address{\textnormal{Edoardo Ballico}. Dipartimento di Matematica dell'Universit\`{a} di Trento\\ 
         Via Sommarive 14, 
         38123 Povo (TN), Italy}
\email{{edoardo.ballico@unitn.it}}

\author{Chiara Marcolla}
\address{\textnormal{Chiara Marcolla}. Dipartimento di Matematica dell'Universit\`{a} di Torino\\ 
         Via Carlo Alberto 10, 
         10123 Torino, Italy}
\email{{chiara.marcolla@unito.it}}

%----------------------------------------------------------------------------------------

%----------------------------------------------------------------------------------------
%	TABLE OF CONTENTS & LISTS OF FIGURES AND TABLES
%----------------------------------------------------------------------------------------

\maketitle % Print the title/author/date block

\begin{abstract} % This section will not appear in the table of contents due to the star (\section*)

We study the locally recoverable codes on algebraic curves. In the first part of this article, we provide a bound of generalized Hamming weight of these codes. Whereas in the second part, we propose a new family of algebraic geometric LRC codes, that are LRC codes from Norm-Trace curve. Finally, using some properties of Hermitian codes, we improve the bounds of distance proposed in \cite{btv} for some Hermitian LRC codes.

\end{abstract}

%----------------------------------------------------------------------------------------
%	INTRODUCTION
%----------------------------------------------------------------------------------------

\section{Introduction}
\label{intro}

The $v$-th generalized Hamming weight $d_v(C)$ of a linear code $C$ is the minimum support size of $v$-dimensional
subcodes of $C$. The sequence $d_1(C),\ldots,d_k(C)$ of generalized Hamming weights 
was introduced by Wei \cite{wei1991generalized} to characterize the performance of a linear code on the wire-tap channel of type II. 
Later, the GHWs of linear codes have been used in many other applications regarding the communications, as for bounding the covering radius of linear codes \cite{janwa2007generalized}, 
in network coding \cite{ngai2011network},  in the context of list decoding \cite{gopalan2011list,guruswami2003list},  and finally for secure secret sharing \cite{kurihara2011strongly}.
Moreover, in \cite{chen2007secure} the authors show in which way an arbitrary linear code gives rise to a secret sharing scheme, in \cite{kasami1993complexity, kasami1993optimum} 
the connection between the trellis or state complexity of a code and its GHWs is found and in \cite{forney1994dimension} the author proves the equivalence to the dimension/length profile of a code  and its generalized Hamming weight. 
For these reasons, the GHWs (and their \textit{extended} version, the \textit{relative} generalized Hamming weights \cite{luo2005some, kurihara2012secret}) play a central role in coding theory.
In particular, generalized and relative generalized Hamming weights are studied for Reed-Muller codes \cite{heijnen1998generalized,  martin2014relative} and for codes constructed by using an algebraic curve \cite{geil2014relative} as Goppa codes \cite{munuera1994generalized, yang1994weight}, Hermitian codes  \cite{homma2009second, munuera1999second} and Castle codes \cite{olaya2015second}.\\

%[9, 11, 17, 21] and in particular
%[5, 22, 23, 25, 36] 

%See for example \cite{heijnen1998generalized,  homma2009second, kurihara2012secret, martin2014relative} and in particular \cite{geil2014relative,  munuera1994generalized, munuera1999second, olaya2015second, yang1994weight} for codes constructed by using an algebraic curve as Goppa codes, Hermitian codes and Castle codes.\\ 

In this paper, we provide a bound on the generalized Hamming weight of locally recoverable codes on the algebraic curves proposed in \cite{btv}. Moreover, we introduce a new family of algebraic geometric LRC codes and improve the bounds on the distance for some Hermitian LRC codes.\\

Locally recoverable codes were introduced in \cite{9gopalan2012locality} and they have been significantly studied because of their
applications in distributed and cloud storage systems \cite{7forbes2014locality, 11huang2013pyramid, 7Dsilberstein2013optimal,9Dtamo2014family, 11Dtamo2013optimal}.
We recall that a code $C\in (\Fq)^n$
has locality $r$ if every symbol of a codeword $c$ can be recovered from a subset of $r$ other symbols of $c$.

\noindent In other words, we consider a finite field $\K=\Fq$, where $q$ is a power of a prime, and an $[n,k]$ code  $C$ over the field $\K$, where $k = log _q(|C|)$. For each $i\in \{1,\ldots,n\}$ and each $a\in \K$ set $C(i,a)= \{c\in C \mid c_i=a\}$.
For each $I\subseteq \{1,\ldots,n\}$ and each $S\subseteq C$ let $S_I$ be the restriction of $S$ to the coordinates in $I$.
\begin{definition}
Let $C$  be an $[n,k]$ code over the field $\K$, where $k = log _q(|C|)$. Then $C$ is said to have {\textbf{all-symbol locality r}} if for each $a\in \Fq$ and each $i\in \{1,\ldots,n\}$ there is $I_i\subset\{1,\ldots,n\}\setminus \{i\}$ with $|I_i|\leq r$,
such that for $C_{I_i}(i,a)\cap C_{I_i}(i,a') =\emptyset$ for all $a\ne a'$. We use the notation $(n,k,r)$ to refer to the parameters of this code.
\end{definition}
Note that if we receive a codeword $c$ correct except for an erasure at $i$, we can recover the codeword by looking at its coordinates in $I_i$. For this reason, $I_i$ is called a \textit{recovering set} for the
symbol $c_i$.\\

Let $C$ be an $(n,k,r)$ code, then the distance of this code has to verify the bound proved in  \cite{3papailiopoulos2014locally,9gopalan2012locality} that is $ d\leq n-k- \lceil k/r\rceil +2$.
The codes  that achieve this bound with equality are called \textit{optimal} LRC codes \cite{7Dsilberstein2013optimal, 9Dtamo2014family, 11Dtamo2013optimal}.  Note that when $r = k$, we obtain the Singleton bound, therefore optimal LRC codes with $r=k$ are MDS codes.

%In this last years, the  locally recoverable (LRC) codes have been significantly studied because of their applications in distributed and cloud storage systems \cite{7forbes2014locality, 10han2007reliable,11huang2013pyramid, 7Dsilberstein2013optimal,9Dtamo2014family, 11Dtamo2013optimal}. In particular, in \cite{10han2007reliable,11huang2013pyramid}  were been studied codes that have \textit{good locality properties} %in [5,8] were been considered a \textit{multiple recovering sets}, which are an extensions of classical definition 
 %

%
\paragraph*{Layout of the paper} This  paper is divided as follows. In Section~\ref{pre} we recall the notions of algebraic geometric codes and the definition of algebraic geometric locally recoverable codes introduced in \cite{btv}.
In Section~\ref{weight_AG_LRC} we provide a bound on the generalized Hamming weights of the latter codes. In Section~\ref{NT_LRC_code} we propose a new family of algebraic geometric LRC codes, which are LRC codes from the Norm--Trace curve. Finally, in Section \ref{Her_Dist} we improve the bounds on the distance proposed in \cite{btv} for some Hermitian LRC codes, using some properties of the Hermitian codes.

%----------------------------------------------------------------------------------------
%	PRELIMINARY NOTIONS
%----------------------------------------------------------------------------------------

\section{Preliminary notions}\label{pre}
\subsection{Algebraic geometric codes}
\label{AG}
Let $\K=\Fq$ be a finite field, where $q$ is a power of a prime.
Let $\X$ be a smooth projective absolutely irreducible nonsingular  curve over $\K$.
We denote by $\K(\X)$ the rational functions field on $\X$.
%We recall that a \textit{divisor} on $\X$ is a formal sum 
%$$D =\sum_{P\in\xx} n_P P \mbox{ with } n_P \in \mathbb{Z}\mbox{ and almost all } n_P = 0.$$
%The \textit{support} of a divisor is the set of points with nonzero coefficient, that is $supp(D)=\{P\in \xx \mid n_P\ne 0\}.$
%
Let $D$ be a divisor on the curve $\X$. We recall that the \textit{Riemann-Roch space} associated to $D$ is a vector space
$\mathcal{L}(D)$ over $\K$ defined as
$$\mathcal{L}(D) = \{f \in \K(\X) \mid (f) + D \geq 0\} \cup \{0\}.$$
where we denote by $(f)$ the divisor of $f$.\\

Assume that $P_1,\ldots,P_n$ are rational points on $\mathcal{X}$ and $D$ is a divisor such that $D=P_1+\ldots+P_n.$
Let $G$ be some other divisor such that $supp(D)\cap supp(G)=\emptyset$.
Then we can define the algebraic geometric code as follows:
\begin{definition}
The \textbf{algebraic geometric code} (or AG code) $C(D,G)$ associated
with the divisors $D$ and $G$ is defined as
$$C(D,G) = \{(f(P_1), \ldots, f(P_n)) \mid f\in \mathcal{L}(G)\} \subset \K^n.$$
The dual  $C^\perp(D,G)$ of $C(D,G)$ is an algebraic geometric code.
\end{definition}
\noindent In other words an algebraic geometric code is the image of the evaluation map $\mathrm{Im}(ev_{D})=C(D,G)$, where the \textit{evaluation map}  $ev_D : \mathcal{L}(G)\rightarrow \K^n$ is given by
$$ev_D(f) = (f(P_1),\ldots, f(P_n))\in \K^n.$$
Note that if  $D=P_1+\ldots+P_n$ and we denote by $\mathcal P=\{P_1,\ldots,P_n\}$ we can also indicate $ev_D$ as $ev_\mathcal{P}$.

%==================================================================

\subsection{Algebraic geometric locally recoverable codes}
\label{AG_LRC}
In this section we consider the construction of algebraic geometric locally recoverable codes of \cite{btv}. \\

%We consider the construction of algebraic geometric LRC codes.\\
%
Let $\X$ and $\Y$  be smooth projective absolutely irreducible curves over $\K$.
Let $g : \X \rightarrow \Y$ be a rational separable
map of curves of degree $r + 1$. Since $g$ is separable, then there exists a function  $x\in\K(\X)$ such that $\K(\X)=\K(\Y)(x)$ and that $x$ satisfies the equation $x^{r+1} + b_rx^r +\ldots+ b_0 = 0$, where $b_i\in \K(\Y)$. The function $x$ can be considered as a map $x : \X \rightarrow \PP_\K$. Let $h=\deg(x)$ be the degree of $x$.\\
We consider a subset $S=\{P_1,\ldots,P_s\}\subset \Y(\K)$ of $\Fq$-rational points of $\Y$, a divisor $Q_\infty$ such that $supp(Q_\infty)\cap supp(S)=\emptyset$ and a positive divisor $D=t Q_\infty$. We denote by 
$$
\A=g^{-1}(S)=\{P_{ij}, \mbox{ where } i=0,\ldots,r, \, j=1,\ldots,s\}\subset\X(\K),
$$ 
where $g(P_{ij})=P_i$ for all $i,j$ and assume that $b_i$ are functions in $\LL(n_iQ_\infty)$ for some natural numbers $n_i$ with $i=1,\ldots,r$.\\
Let $\{f_1,\ldots,f_m\}$ be a basis of the Riemann-Roch space $\LL(D)$. By the Riemann-Roch Theorem we have that $m\geq \deg(D)+1-g_\Y$, where $g_\Y$ is the genus of $\Y$.\\

From now on, we assume that $m=\deg(D)+1-g_\Y$, where $\deg(D)=t\ell$, and we consider the $\K$-subspace $V$ of $\K(\X)$ of dimension $rm$ generated by
$$\mathcal B=\{f_jx^i, \, i=0,\ldots,r-1,\, j=1,\ldots, m\}.$$
We consider the evaluation map $ev_{\mathcal{A}}:V\rightarrow \K^{(r+1)s}$.
Then we have the following theorem.
\begin{theorem}\label{teo.3.1}
The linear space  $C(D,g)=\mathrm{Span}_{\K^{(r+1)s}}\langle ev_{\A}(\mathcal B)\rangle$ is an $(n,k,r)$ algebraic geometric LRC code with parameters
$$
\begin{array}{ll}
   n&=(r+1)s\\
k&=rm\geq r(t\ell+1-g_\Y) \\
d&\geq n-t\ell(r+1)-(r-1)h.
\end{array}
$$ 
\begin{proof} 
See Theorem 3.1 of \cite{btv}.
\end{proof}
\end{theorem}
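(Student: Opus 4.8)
The plan is to reduce every statement to the explicit normal form of the defining functions. Each $F\in V$ can be written uniquely as $F=\sum_{i=0}^{r-1}\phi_i x^i$ with $\phi_i\in\LL(D)$, viewed in $\K(\X)$ through the inclusion $\K(\Y)\hookrightarrow\K(\X)$: since $x$ has degree $r+1$ over $\K(\Y)$, the powers $1,x,\dots,x^{r-1}$ admit no nontrivial $\K(\Y)$-linear relation, so $\mathcal{B}$ is a $\K$-basis of $V$ and $\dim_\K V=r\cdot\dim_\K\LL(D)=rm$. The length is immediate: $g$ has degree $r+1$, and each $P_i\in S$ (implicitly chosen outside the branch locus and split completely into $r+1$ rational points, as the indexing $\A=\{P_{ij}\}$ records) contributes a fiber of size $r+1$, so $n=|\A|=(r+1)s$. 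Because $S\cap\mathrm{supp}(Q_\infty)=\emptyset$ and each $b_i\in\LL(n_iQ_\infty)$ is regular along $S$, the element $x$, integral over $\K(\Y)$ with these coefficients, is finite at every point of $\A$, so $ev_{\A}$ is genuinely defined on $V$.

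For locality I would fix the symbol indexed by $P_{ab}$ in the fiber $g^{-1}(P_a)$ and take $I$ to be the remaining $r$ coordinates of that fiber. On this fiber each $\phi_i$ is the constant $\phi_i(P_a)$, so the fiber-restriction of $ev_{\A}(F)$ is the evaluation of the single-variable polynomial $p(T)=\sum_{i=0}^{r-1}\phi_i(P_a)T^i$, of degree at most $r-1$, at the $r+1$ distinct roots $x(P_{0a}),\dots,x(P_{ra})$ of $T^{r+1}+b_r(P_a)T^r+\cdots+b_0(P_a)$. Given the values of $p$ at any $r$ of these $r+1$ distinct points, Lagrange interpolation recovers $p$, hence the erased value; this yields all-symbol locality $r$.

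The distance bound is the heart of the argument, and I expect the pole-order bookkeeping to be the only delicate step. For nonzero $F=\sum_{i=0}^{r-1}\phi_i x^i$, the weight of $ev_{\A}(F)$ is $n$ minus the number of points of $\A$ where $F$ vanishes, which is at most $\deg(F)_\infty$. Here each $\phi_i\in\LL(tQ_\infty)$ has pole divisor on $\X$ bounded by $g^*(tQ_\infty)$, of degree $\deg(g)\cdot t\deg(Q_\infty)=t\ell(r+1)$, while $x^i$ has pole divisor bounded by $i\,(x)_\infty$ of degree $ih\le (r-1)h$. Hence $(F)_\infty\le g^*(tQ_\infty)+(r-1)(x)_\infty$, and
\begin{equation*}
\#\{P\in\A:\ F(P)=0\}\ \le\ \deg(F)_\infty\ \le\ t\ell(r+1)+(r-1)h ,
\end{equation*}
so that $\mathrm{wt}(ev_{\A}(F))\ge n-t\ell(r+1)-(r-1)h$, giving the claimed bound on $d$. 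The point requiring care is precisely that $\deg(g^*Q_\infty)=(r+1)\ell$ and $\deg(x)_\infty=h$, and that the two families of poles stay confined away from $\A$; this is where the hypotheses $b_i\in\LL(n_iQ_\infty)$ and $\mathrm{supp}(Q_\infty)\cap\mathrm{supp}(S)=\emptyset$ are used.

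Finally I would settle the dimension using the same zero-count. Under the natural nondegeneracy hypothesis $n>t\ell(r+1)+(r-1)h$ that makes the distance bound positive, no nonzero $F\in V$ can vanish on all of $\A$, so $ev_{\A}|_V$ is injective and $k=\dim_\K V=rm$. The inequality $rm\ge r(t\ell+1-g_\Y)$ is then $r$ times the Riemann--Roch estimate $m=\dim_\K\LL(D)\ge\deg(D)+1-g_\Y=t\ell+1-g_\Y$, with equality exactly under the non-speciality assumption already imposed.
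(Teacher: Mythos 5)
Your argument is correct and is essentially the proof of Theorem 3.1 of \cite{btv}, to which the paper simply defers: linear independence of $1,x,\dots,x^{r-1}$ over $\K(\Y)$ for the dimension, Lagrange interpolation of a degree-$(r-1)$ polynomial on each fiber for locality, and the pole-order bound $\deg(F)_\infty\le t\ell(r+1)+(r-1)h$ for the distance. Your explicit flagging of the implicit hypotheses (complete splitting of the fibers over $S$, distinct $x$-values on each fiber, and $n>t\ell(r+1)+(r-1)h$ for injectivity of $ev_{\A}$) is a welcome clarification rather than a deviation.
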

\noindent The AG LRC codes have an additional property.
They are LRC codes $(n,k,r)$ with $(r+1)\,|\, n$ and $r\,|\,k$. The set $\{1,\ldots,n\}$ can be divided into $n/ (r+1)$ disjoint subsets $U_j$ for $1\le j \le s$ with the same cardinality $r+1$.
For each $i$ the set $I_i\subseteq \{1,\ldots,n\}\setminus\{i\}$ is the complement of $i$ in the element of the partition $U_j$ containing $j$, i.e.
for all $i, j\in \{1,\ldots,n\}$ either $I_i =I_j$ or $I_i\cap I_j=\emptyset$.\\ 
Moreover, they have also the following nice property. Fix $w\in (\K)^n$ and denote by
$w_{U_j}=\{w_{\iota}$, for any $\iota \in U_j\}$. Suppose we receive all the symbols in $U_j$. There is a simple linear parity test on the $r+1$ symbols of $U_j$ such that if this parity check fails
we know that at least one of the symbols in $U_j$ is wrong. If we are guaranteed (or we assume) that at most one of the symbols in $U_j$
is wrong and the parity check is OK, then all the symbols in $U_j$ are correct.
Moreover we can recover an erased symbol $w_\iota$, with $\iota\in U_j$ using a polynomial interpolation through the points of the recovering set $w_{U_j}$.

%----------------------------------------------------------------------------------------
%	RESEARCH
%----------------------------------------------------------------------------------------

\section{Generalized Hamming weights of AG LRC codes}
\label{weight_AG_LRC}

Let $\K$ be a field and let $\X$ be a smooth and geometrically connected curve of genus $g\ge 2$ defined over the field $K$.  We also assume $\X(\K)\ne \emptyset$. 
We recall the following definitions:
\begin{definition}[\cite{p}, \cite{p2}]
The $\K$-\textbf{gonality} $\gamma_\K(\X)$ of $\X$ over a field $\K$ is the smallest possible degree of a dominant rational map 
$\X \rightarrow \PP^1_\K$. For any field extension $L$ of $\K$, we define also the $L$\textbf{-gonality}  $\gamma_L(\X)$ of $\X$ as the gonality of the base extension $\X_L = \X \times_\K L$. It is an invariant of the function field $L(\X)$ of $\X_L$.
\end{definition}
Moreover, for each integer $i>0$, the \textit{$i$-th gonality} $\gamma _{i,L}(\X)$ of $\X$ %as the invariant $\gamma_{i,L}(\X)$ of $\X$ is defined to be the smallest number $z$ such that X admits a nondegenerate rational map of degree $\gamma$ into $\PP^z$. In other words, it 
is the minimal degree $z$ such that there
is $R\in \mbox{Pic}^z(\X)(L)$ with $h^0({R}) \ge i+1$. The sequence $\gamma _{i,\overline{\K}}(\X)$ is the usual \textit{gonality sequence} \cite{lm}.
Moreover, the integer $\gamma _{1,\K}(\X)=\gamma_\K(\X)$ is the  $\K$-gonality of $\X$. \\

Let $\K=\Fq$ a finite field with $q$ elements. 
Let $C\subset \K^n$ be a linear $[n,k]$ code over $\K$. 
We recall that the \textit{support} of $C$ is defined as follows
$$
supp(C)=\{i \mid c_i \ne 0 \mbox{ for some } c \in C\}.
$$
So $\sharp supp(C)$ is the number of nonzero columns in a generator matrix for $C$.
Moreover, for any $1\leq v\leq k$, the \textit{$v$-th generalized Hamming weight} of $C$  \cite[\S 7.10]{hp}, \cite[ \S 1.1]{tvn} is defined by 
$$
d_v(C)=\min\{\sharp supp(\mathcal D) \mid \mathcal D \mbox{ is a linear subcode of $C$ with } dim(\mathcal D)=v\}.
$$
In other words, for any integer $1\leq v\le k$, $d_v(C)$ is the $v$-th minimum support weights, i.e. the minimal integer $t$ such that
there are an $[n,v]$ subcode $\mathcal {D}$ of $C$ and a subset $S\subset \{1,\dots ,n\}$ such that $\sharp (S)=t$ and each codeword of $\mathcal {D}$ has zero coordinates outside $S$. The sequence $d_1(C),\ldots,d_k(C)$ of generalized Hamming weights (also called \textit{weight hierarchy} of $C$) is strictly increasing (see Theorem~7.10.1 of \cite{hp}).
Note that $d_1(C)$ is the minimum distance of the code $C$.\\

Let us consider $\X$ and $\Y$  smooth projective absolutely irreducible curves over $\K$ and let $g : \X \rightarrow \Y$ be a rational separable
map of curves of degree $r + 1$. Moreover we take $r,t,Q_\infty$, $f_1,\dots ,f_m$ and $\A= g^{-1}(S)$ defined as Section \ref{AG_LRC}. So we can construct  an $(n,k,r)$ algebraic geometric LRC code $C$ as in Theorem \ref{teo.3.1}. For this code we have the following:
\begin{theorem}\label{a1}
Let $C$ be an $(n,k,r)$ algebraic geometric LRC code as in Theorem~\ref{teo.3.1}.
For every integer $v \ge 2$ we have that
$$d_v(C) \ge n-t\ell (r+1) -(r-1)h +\gamma _{v-1,K}(\X).$$
\end{theorem}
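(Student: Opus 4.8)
The plan is to realize $C$ as an evaluation code inside a single Riemann--Roch space on $\X$, and then interpret the common zeros of a subcode as imposing independent conditions whose count is controlled by the $\K$-gonality.

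First I would pin down one $\K$-rational divisor $E$ on $\X$ whose Riemann--Roch space contains the evaluated space $V$. Write $(x)_\infty$ for the pole divisor of $x\colon\X\to\PP^1_\K$, so $\deg (x)_\infty=h$, and note that each $f_j$, viewed on $\X$ through the inclusion $\K(\Y)\hookrightarrow\K(\X)$, has pole divisor bounded by $t\,g^*Q_\infty$, a divisor of degree $t(r+1)\ell$. Consequently every basis element $f_jx^i$ with $0\le i\le r-1$ has poles bounded by
$$E:=t\,g^*Q_\infty+(r-1)(x)_\infty,$$
so that $V\subseteq\LL(E)$ with $\deg E=t\ell(r+1)+(r-1)h$, and $E$ is defined over $\K$ because $g$, $x$ and $Q_\infty$ are. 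This is precisely the datum that produces the distance bound of Theorem~\ref{teo.3.1}, so the case $v=1$ is recovered.

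Next I would fix a subcode $\mathcal D\subseteq C$ with $\dim\mathcal D=v$. Since $k=\dim C=rm=\dim V$, the map $ev_\A$ is injective on $V$, hence $\mathcal D=ev_\A(W)$ for a unique $v$-dimensional subspace $W\subseteq V$. A coordinate $P\in\A$ fails to lie in $supp(\mathcal D)$ exactly when every $\psi\in W$ vanishes at $P$; letting $Z=\{P\in\A:\psi(P)=0\ \forall\,\psi\in W\}$ and $Z'=\sum_{P\in Z}P$ (a reduced, $\K$-rational effective divisor, as $Z\subseteq\X(\K)$), we get $\sharp supp(\mathcal D)=n-\sharp Z$. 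The key observation is that every element of $W$ vanishes along $Z'$ while still lying in $\LL(E)$, so $W\subseteq\LL(E-Z')$ and therefore $h^0(E-Z')\ge\dim W=v$. Invoking the definition of the $\K$-gonality: the class of $E-Z'$ lies in $\mathrm{Pic}^{\deg E-\sharp Z}(\X)(\K)$ and satisfies $h^0(E-Z')\ge v=(v-1)+1$, so by the minimality in the definition of $\gamma_{v-1,\K}(\X)$ we obtain $\deg(E-Z')\ge\gamma_{v-1,\K}(\X)$, that is
$$\sharp Z\le \deg E-\gamma_{v-1,\K}(\X)=t\ell(r+1)+(r-1)h-\gamma_{v-1,\K}(\X).$$
Substituting into $\sharp supp(\mathcal D)=n-\sharp Z$ and minimizing over all $v$-dimensional subcodes $\mathcal D$ yields the claimed bound.

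The main point to argue carefully is the rationality: the invariant $\gamma_{v-1,\K}$ is taken over the ground field $\K$ rather than over $\overline{\K}$, so before applying the minimality in its definition one must check that $E-Z'$ genuinely represents a $\K$-rational class — which it does, since $E$, $g$, $x$, and all points of $Z$ are defined over $\K$, making $\LL(E-Z')$ a $\K$-vector space of the required dimension. The remaining care is bookkeeping: verifying that the single divisor $E$ simultaneously bounds the poles of all products $f_jx^i$, and that the injectivity of $ev_\A$ guaranteed by $k=rm$ makes the correspondence $\mathcal D\leftrightarrow W$ exact, so that the identity $\sharp supp(\mathcal D)=n-\sharp Z$ holds on the nose.
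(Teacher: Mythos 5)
Your proof is correct and follows essentially the same route as the paper: both bound the poles of all functions in $V$ by a single $\K$-rational divisor of degree $t\ell(r+1)+(r-1)h$, subtract the common zeros of the $v$-dimensional subcode, and apply the definition of $\gamma_{v-1,\K}(\X)$ to the resulting $\K$-rational class with $h^0\ge v$. If anything, your formulation via $\LL(E-Z')$ is slightly cleaner than the paper's appeal to the family of linearly equivalent divisors $F_u-E$, since it avoids having to pretend that all nonzero elements of the subcode share the same pole divisor.
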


\begin{proof}
 Take a $v$-dimensional linear subspace $\mathcal {D}$ of $C$ and call
$$E\subseteq \{P_{ij} \mid i=0,\dots r,\, j=1,\dots ,s\},$$ 
the set of common zeros of all elements of $\mathcal {D}$. 
Since $n-d_v(C)=\sharp (E)$, we have to prove that $t\ell (r+1)+(r-1)h -\sharp (E)\ge \gamma _{v-1,K}(X)$. Fix $u\in \mathcal {D}\setminus \{0\}$ and let $F_u$ denote the zeros of $u$. Note that $F_u$ is contained
in the set $\{P_{ij} \mid i=0,\dots r, \,j=1,\dots ,s\}$ by the definition of the code $C$. We have
$F_u\supseteq E$. By the definition of the integers $t, \ell$ and $h:= \deg (x)$, we have $\sharp (F_u) \le t\ell (r+1)+(r-1)h$. 
The divisors $F_u-E$, $u\in \mathcal {D}\setminus \{0\}$ form a family of linearly equivalent non-negative divisors, each of them defined over $\K$. Since
$\dim (\mathcal {D}) = v$, the definition of $\gamma _{v-1,\overline{\K}}(\X)$ gives $\sharp (F_u)-\sharp (E) \ge \gamma _{v-1,\K}(\X)$. This inequality
for a single $u\in \mathcal {D}\setminus \{0\}$ proves the theorem.
\end{proof}

See Remark \ref{rem.NT} for an application of Theorem \ref{a1}.

%\providecommand{\bysame}{\leavevmode\hbox to3em{\hrulefill}\thinspace}

%==================================================================

\section{LRC codes from Norm--Trace Curve}
\label{NT_LRC_code}
In this section we propose a new family of Algebraic Geometric LRC codes, that is, a LRC codes from the Norm--Trace curve. Moreover, we compute the $\Fqs$-gonality of the Norm-Trace curve. \\

Let $\K=\Fqs$ be a finite field, where $q$ is a power of a prime.
We consider the \textit{norm} $\mathrm{N}^{\mathbb{F}_{q^u}}_{\Fq}$ and the \textit{trace}
$\mathrm{Tr}^{\mathbb{F}_{q^u}}_{\Fq}$, two functions from $\mathbb{F}_{q^u}$ to $\Fq$ defined as
$$\N{\mathbb{F}_{q^u}}{\Fq}{x}=x^{1+q+\dots+q^{u-1}} \mbox{ and }\,\,
\Tr{\mathbb{F}_{q^u}}{\Fq}{x}=x+x^{q}+\dots+x^{q^{u-1}}.$$
\noindent The \textit{Norm-Trace curve} $\chi$ is the curve
defined over $\K$ by the following affine equation 
$$\N{\mathbb{F}_{q^u}}{\Fq}{x}=\Tr{\mathbb{F}_{q^u}}{\Fq}{y},$$
that is,
\begin{equation}\label{eqNT}
x^{(q^u-1)/(q-1)}=y^{q^{u-1}}+y^{q^{u-2}}+\ldots+ y \mbox{ where } x,y\in\K
\end{equation}
The Norm-Trace curve $\chi$ has exactly $n=q^{2u-1}$ $\K$-rational affine points (see Appendix A of \cite{CGC-cd-art-Geil-1}), that we denote by $\mathcal{P}_\chi=\{P_1,\ldots,P_n\}$.  The genus of $\chi$ is $g =\frac{1}{2}(q^{u-1}-1)(\frac{q^u-1}{q-1}-1)$.
Note that if we consider $u=2$, we obtain the Hermitian curve. \\

Starting from the Norm--Trace curve, we have two different ways to construct Norm--Trace LRC codes.
\paragraph{\textbf{Projection on $\mathbf x$}}
We have to construct a $q^u$-ary $(n,k,r)$ LRC codes.
We consider the natural projection $g(x,y)=x$. Then the degree of $g$ is $q^{u-1}=r+1$ and the degree of $y$ is $h=1+q+\dots+q^{u-1}$.\\
To construct the codes we consider $S=\Fqs$  and $D=tQ_\infty$ for some $t\geq 1$. Then, using a construction of Theorem \ref{teo.3.1} we find the parameters for these  Norm--Trace LRC codes.
\begin{proposition}
A family of Norm--Trace LRC codes has the following parameters:
$$n=q^{2u-1}, \quad k=mr=(t+1)(q^{u-1}-1)$$ and 
$$d\geq n-tq^{u-1}-(q^{u-1}-1)(1+q+\dots+q^{u-1}).$$
\end{proposition}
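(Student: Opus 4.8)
The plan is to read the statement off Theorem~\ref{teo.3.1} applied to the degree-$q^{u-1}$ cover $g\colon\chi\to\PP^1_\K$, $g(x,y)=x$, so that essentially all the work is to match the Norm--Trace data to the ingredients of the general construction of Section~\ref{AG_LRC} and to nail down the two degrees on which the parameters rely. Here the base is $\Y=\PP^1_\K$ with $\K(\Y)=\K(x)$, hence $g_\Y=0$; the function generating $\K(\chi)$ over $\K(\Y)$ --- the ``$x$'' of Section~\ref{AG_LRC} --- is the coordinate $y$, with minimal polynomial $y^{q^{u-1}}+y^{q^{u-2}}+\cdots+y-\N{\Fqs}{\Fq}{x}=0$ coming from~\eqref{eqNT}. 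First I would record that this exhibits $g$ as a separable cover of degree $r+1=q^{u-1}$: the polynomial is monic of degree $q^{u-1}$ in $y$, its coefficients are polynomials in $x$ and hence lie in suitable spaces $\LL(n_iQ_\infty)$ as the construction requires, and its $y$-derivative is $1$ because every $y^{q^i}$ with $i\ge1$ has vanishing derivative in characteristic $p$, so the extension is separable.

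Next I would compute $h=\deg(y)$ and check the fibre structure. For the former I would count a generic fibre of $y$: for $c$ with $\Tr{\Fqs}{\Fq}{c}\ne0$ the equation $y=c$ becomes $x^{(q^u-1)/(q-1)}=\Tr{\Fqs}{\Fq}{c}$, a separable equation (since $p\nmid(q^u-1)/(q-1)$, as this number is $\equiv1\pmod p$) with $(q^u-1)/(q-1)=1+q+\cdots+q^{u-1}$ distinct roots, giving $h=1+q+\cdots+q^{u-1}$. For the latter, taking $S=\Fqs$ so that $s=q^u$, I would verify that each fibre $g^{-1}(a)$ with $a\in\Fqs$ consists of exactly $q^{u-1}$ distinct $\K$-rational points: $\N{\Fqs}{\Fq}{a}\in\Fq$ always, and the $\Fq$-linear surjection $\Tr{\Fqs}{\Fq}{\cdot}\colon\Fqs\to\Fq$ has all fibres of size $q^{u-1}$, so $\A=g^{-1}(S)$ is exactly the set of $q^{2u-1}$ affine $\K$-points of $\chi$. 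Finally, choosing $Q_\infty$ to be the point at infinity gives $\ell=\deg(Q_\infty)=1$, $\deg(D)=t$, and the standing hypothesis $m=\deg(D)+1-g_\Y$ yields $m=t+1$.

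With these values I would substitute $r+1=q^{u-1}$, $s=q^u$, $\ell=1$, $m=t+1$ and $h=1+q+\cdots+q^{u-1}$ into the three formulas of Theorem~\ref{teo.3.1}: the first two give $n=(r+1)s=q^{2u-1}$ and $k=rm=(t+1)(q^{u-1}-1)$ immediately. For the distance, the bound $d\ge n-t\ell(r+1)-(r-1)h$ becomes $d\ge n-tq^{u-1}-(q^{u-1}-2)h$, from which the displayed (slightly weaker) inequality with $(q^{u-1}-1)$ in place of $(q^{u-1}-2)$ follows at once. I expect the only genuinely substantive steps to be the two degree computations and the uniform fibre-size count; once the identification $r=q^{u-1}-1$ is kept straight, the remainder is a mechanical reading of Theorem~\ref{teo.3.1}.
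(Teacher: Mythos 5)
Your proposal is correct and follows essentially the same route as the paper, which states this proposition by reading the parameters off Theorem~\ref{teo.3.1} for the degree-$q^{u-1}$ projection $g(x,y)=x$ with $\Y=\PP^1_\K$, $s=q^u$, $\ell=1$, $m=t+1$ and $h=\deg(y)=1+q+\dots+q^{u-1}$; your verifications of separability, of the fibre sizes, and of $h$ simply make explicit what the paper leaves implicit. You are also right that a literal substitution into the distance bound gives the sharper coefficient $(r-1)h=(q^{u-1}-2)(1+q+\dots+q^{u-1})$, so the displayed inequality with $q^{u-1}-1$ is the (weaker) consequence you claim.
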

\paragraph{\textbf{Projection on $\mathbf y$}}
We have to construct a $q^u$-ary $(n,k,r)$ LRC codes.
We consider the other natural projection $g'(x,y)=y$. Then $\deg(g')=1+q+\dots+q^{u-1}=r+1$.
In this case we take $S=\Fqs\backslash M$, where 
$$M=\{a\in\Fqs \mid a^{q^{u-1}}+a^{q^{u-2}}+\ldots+ a =0\},$$ 
so $r=q+\dots+q^{u-1}$ and $h=\deg(x)=q^{u-1}$. Then, using Theorem~\ref{teo.3.1} we have the following
\begin{proposition}
A family of Norm--Trace LRC codes has the following parameters:
$$ n=q^{2u-1}-q^{u-1}, \quad k=mr=(t+1)(q+\dots+q^{u-1})$$ and 
{\small{$$d\geq n-tq^{u-1}-(q+\dots+q^{u-1})-q^{u-1}(q^{u-1}+\dots+q-1).$$}}
\end{proposition}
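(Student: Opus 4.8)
The plan is to reduce the statement to a direct application of Theorem~\ref{teo.3.1}: once the projection $g'=y$ is recognized as a degree-$(r+1)$ covering $\chi\to\PP^1$ and the auxiliary data $(r,h,\ell,m,s,n)$ are computed, the three parameters $n$, $k$ and $d$ are read off from the formulas of that theorem. So the real content is to identify each of these quantities for the projection on $y$ and to check that the hypotheses of the construction in Section~\ref{AG_LRC} hold; in particular that the evaluation set $\A=g'^{-1}(S)$ consists of exactly $(r+1)s$ rational points organized into full-size recovery fibres.

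First I would determine the degree of $g'=y:\chi\to\PP^1$. Viewing $y$ as the base coordinate, the fibre over a generic value $a$ is the set of $\K$-solutions $x$ of the norm equation $\N{\mathbb{F}_{q^u}}{\Fq}{x}=x^{(q^u-1)/(q-1)}=\Tr{\mathbb{F}_{q^u}}{\Fq}{a}$, an equation of degree $(q^u-1)/(q-1)=1+q+\dots+q^{u-1}$ in $x$; hence $\deg(g')=r+1=1+q+\dots+q^{u-1}$ and $r=q+\dots+q^{u-1}$. Dually, the generator of $\K(\chi)/\K(y)$ is $x$, whose degree as a map $\chi\to\PP^1$ equals the $y$-degree $q^{u-1}$ of equation~\eqref{eqNT}, so $h=\deg(x)=q^{u-1}$. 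Taking $\Y=\PP^1$ forces $g_\Y=0$, and choosing $Q_\infty$ to be the rational point at infinity gives $\ell=\deg(Q_\infty)=1$ and $\LL(tQ_\infty)=\langle 1,y,\dots,y^t\rangle$, whence $m=\deg(D)+1-g_\Y=t+1$. This already yields $k=rm=(t+1)(q+\dots+q^{u-1})$, while $n=(r+1)s$ is pending the value of $s$, and $d$ will follow from the bound $d\ge n-t\ell(r+1)-(r-1)h$.

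The step that requires genuine care—and which dictates the choice $S=\Fqs\setminus M$—is the verification that every fibre used for local recovery has the full number $r+1$ of rational points. Here I would invoke the surjectivity of the norm $\N{\mathbb{F}_{q^u}}{\Fq}{\cdot}:\mathbb{F}_{q^u}^\times\to\Fq^\times$, whose kernel has order $(q^u-1)/(q-1)=r+1$: for $a\in\Fqs$ with $\Tr{\mathbb{F}_{q^u}}{\Fq}{a}\ne 0$ the equation $x^{r+1}=\Tr{\mathbb{F}_{q^u}}{\Fq}{a}$ has exactly $r+1$ distinct solutions in $\Fqs$, so $g'^{-1}(a)$ is a genuine recovery set of size $r+1$. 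For $a\in M$ the right-hand side vanishes and the fibre degenerates to the single point $x=0$; these degenerate fibres must be discarded, which is precisely why one sets $S=\Fqs\setminus M$. Since $\Tr{\mathbb{F}_{q^u}}{\Fq}{\cdot}$ is $\Fq$-linear and surjective, its kernel $M$ has order $q^{u-1}$, so $s=|S|=q^u-q^{u-1}$ and $n=(r+1)s=\tfrac{q^u-1}{q-1}\,q^{u-1}(q-1)=q^{2u-1}-q^{u-1}$.

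With $n$, $r$, $h$, $\ell$ and $m$ fixed, the three displayed parameters follow by substitution into Theorem~\ref{teo.3.1}, the distance coming from $d\ge n-t\ell(r+1)-(r-1)h$ after the routine simplification of the resulting expression. I expect the counting of rational points in each fibre—equivalently, the norm/trace bookkeeping that justifies removing $M$—to be the only delicate point, since everything else is a mechanical specialization of the general construction to $\chi$.
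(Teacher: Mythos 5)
Your overall route is the one the paper itself takes (the paper offers no proof beyond ``using Theorem~\ref{teo.3.1}''), and your computation of the auxiliary data is correct: $\deg(g')=r+1=1+q+\dots+q^{u-1}$, $h=\deg(x)=q^{u-1}$, $\ell=1$, $m=t+1$, $\sharp M=q^{u-1}$, $s=q^{u}-q^{u-1}$, hence $n=(r+1)s=q^{2u-1}-q^{u-1}$ and $k=rm=(t+1)(q+\dots+q^{u-1})$. The norm/trace bookkeeping that justifies discarding the degenerate fibres over $M$ is exactly the right point to dwell on, and you handle it correctly.

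The gap is in the last step, which you dismiss as ``routine simplification''. Substituting your values into the bound of Theorem~\ref{teo.3.1} gives
$$d\ \ge\ n-t\,(1+q+\dots+q^{u-1})-q^{u-1}\bigl(q+\dots+q^{u-1}-1\bigr),$$
whereas the Proposition asserts
$$d\ \ge\ n-t\,q^{u-1}-(q+\dots+q^{u-1})-q^{u-1}\bigl(q^{u-1}+\dots+q-1\bigr).$$
The two subtracted quantities differ by $(t-q)(1+q+\dots+q^{u-2})$, so they coincide only for $t=q$. For $t<q$ the stated bound is the weaker one and would follow a fortiori, but for $t>q$ it is strictly stronger than what Theorem~\ref{teo.3.1} yields, and your argument cannot reach it: the basis element $y^{t}x^{r-1}$ has pole order exactly $t(r+1)+(r-1)h$ at the point at infinity, so the degree count underlying Theorem~\ref{teo.3.1} is sharp and cannot be improved by this method. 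You should either state and prove the bound in the first display above and flag the discrepancy with the Proposition (which appears to be an error in the paper; note that Remark~\ref{rem.NT} propagates the same expression), or supply a genuinely new argument covering the regime $t>q$. As written, the proof does not establish the displayed inequality.
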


For the  Norm--Trace curve $\chi$ we are able to find the $\K$-gonality of $\chi$.

\begin{lemma}\label{a2}
Let $\chi$ be a Norm--Trace curve defined over $\mathbb {F}_{q^u}$, where $u\ge 2$. We have $\gamma _{1,\mathbb {F}_{q^u}}(\chi) = q^{u-1}$.
\begin{proof}
The linear projection onto the $x$ axis has degree $q^{u-1}$ and it is defined over $\mathbb {F}_{q}$ and hence over $\mathbb {F}_{q^u}$. Thus  $\gamma_{1,\mathbb {F}_{q^u}}(\chi) \le  q^{u-1}$. Denote by $z=\gamma_{1,\mathbb {F}_{q^u}}(\chi)$ and assume that
$z\le q^{u-1}-1$. By the definition of  $\K$-gonality, there is a non-constant morphism
$w: \chi \to \mathbb {P}^1$ with $\deg (w)=z$ and defined over $\mathbb {F}_{q^u}$. Since $w(\chi(\mathbb {F}_{q^u})) \subseteq \mathbb {P}^1(\mathbb {F}_{q^u})$, we get
$\sharp (\chi(\Fqs)) \le z(q^u+1) \le (q^{u-1}-1)(q^u+1)$, that is a contradiction.
\end{proof}
\end{lemma}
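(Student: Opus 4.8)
The plan is to prove the equality by establishing the two inequalities separately, and for each direction there is a natural candidate argument. For the upper bound $\gamma_{1,\Fqs}(\chi) \le q^{u-1}$ I would simply exhibit an explicit low-degree map, namely the projection $(x,y)\mapsto x$ onto the $x$-axis. Since the defining equation \eqref{eqNT} is a polynomial of degree $q^{u-1}$ in $y$, this projection realizes $\chi$ as a degree-$q^{u-1}$ cover of $\PP^1$, and it is already defined over $\Fq$, hence a fortiori over $\Fqs$. This gives the upper bound at once, and it also pins down the numerical value we are aiming for.

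For the lower bound I would argue by contradiction through a point-counting estimate. Suppose $z:=\gamma_{1,\Fqs}(\chi)\le q^{u-1}-1$. By the definition of gonality there is a non-constant morphism $w:\chi\to\PP^1$ of degree $z$ defined over $\Fqs$. The key observation is that, because $w$ is defined over $\Fqs$, it sends $\Fqs$-rational points of $\chi$ to $\Fqs$-rational points of $\PP^1$, and each fiber $w^{-1}(P)$ over a point $P\in\PP^1(\Fqs)$ contains at most $\deg(w)=z$ distinct points of $\chi$. Summing over the $q^u+1$ rational points of $\PP^1$ yields $\sharp\chi(\Fqs)\le z(q^u+1)\le (q^{u-1}-1)(q^u+1)$.

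To conclude I would compare this with the known abundance of rational points on the Norm--Trace curve. The curve has at least its $n=q^{2u-1}$ affine $\Fqs$-rational points, so $\sharp\chi(\Fqs)\ge q^{2u-1}$. Expanding $(q^{u-1}-1)(q^u+1)=q^{2u-1}-(q^u-q^{u-1}+1)$ and noting that $q^u-q^{u-1}+1>0$ for all $u\ge 2$ and $q\ge 2$, the fiber estimate forces $\sharp\chi(\Fqs)<q^{2u-1}$, a contradiction. The step I expect to be the crux is the lower bound, and within it the justification that a degree-$z$ map defined over $\Fqs$ cannot carry more than $z(q^u+1)$ rational points: this rests only on the fact that $w$ induces a map on rational points and that fibers over rational points have at most $\deg(w)$ elements. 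Notably, no Weil-type bound is needed; it is the sheer number of rational points on $\chi$, rather than any deep geometry, that rules out small gonality, so the whole argument stays elementary once the fiber-counting is set up correctly.
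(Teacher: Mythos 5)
Your proof is correct and follows essentially the same route as the paper: the degree-$q^{u-1}$ projection onto the $x$-axis for the upper bound, and for the lower bound the same fiber-counting contradiction $\sharp\chi(\mathbb{F}_{q^u}) \le z(q^u+1) \le (q^{u-1}-1)(q^u+1) < q^{2u-1}$. You merely spell out the final numerical comparison more explicitly than the paper does, which is a welcome clarification rather than a deviation.
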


\begin{remark}\label{rem.NT}
By Lemma \ref{a2}, we can apply Theorem~\ref{a1} to the Norm--Trace curve.
In fact, we can consider the gonality sequence over $\K$ of $\chi$ to get a lower bound on the second generalized Hamming weight of the two families of Norm--Trace LRC codes:
\begin{itemize}
    \item Let $t\geq 1$ and let $C$ be a $(q^{2u-1},(t+1)(q^{u-1}-1),q^{u-1}-1)$ Norm--Trace LRC code. Then we have
$$d_2(C) \ge q^{2u-1}+q^{u-1}-tq^{u-1}-(q^{u-1}-1)(1+q+\dots+q^{u-1}).$$
    \item Let $t\geq 1$ and let $C$ be a Norm--Trace LRC code with parameters $(q^{2u-1}-q^{u-1},$  $(t+1)(q+\dots+q^{u-1}),$ $q+\dots+q^{u-1})$. Then we have 
$$d_2(C) \ge q^{2u-1}-(t-1)q^{u-1}-(1+q^{u-1})(q+\dots+q^{u-1}).$$
\end{itemize}
\end{remark}

%==================================================================

\section{Hermitian LRC codes}
\label{Her_Dist}
In this section we improve the bound on the distance of Hermitian LRC codes proposed in \cite{btv} using some properties of  \textit{Hermitian codes} which are a special case of  algebraic geometric codes.

\subsection{Hermitian codes}

Let us consider $\K=\FQ$ a finite field with $q^2$ elements. 
The \textit{Hermitian curve} $\He$ is defined over $\K$ by the affine equation
\begin{equation}\label{eqHerm}
  x^{q+1}=y^q+y\mbox{ where }x,y\in\K.
\end{equation}
This curve has genus $g=\frac{q(q-1)}{2}$ and has $q^3+1$ points of degree one, namely a pole $Q_{\infty}$ and $n=q^3$  rational affine
points, denoted by $\mathcal{P}_\He=\{ P_1,\ldots,P_n\}$  \cite{CGC-alg-art-rucsti94}.

\begin{definition}
Let $m\in\mathbb N$ such that $0\leq m\leq q^3+q^2-q-2$. Then the \textbf{Hermitian code} $C(m,q)$ is the code
$C (D, mQ_{\infty})$
where
$$D = \sum_{\alpha^{q+1}=\beta^q+\beta} P_{\alpha,\beta} $$
is the sum of all places of degree one (except $Q_{\infty}$, that is a point at infinity) of the Hermitian function field $\K(\He)$. 
\end{definition}
\noindent By Lemma 6.4.4. of \cite{CGC-cd-book-stich} we have that
$$
\Bmq=\{x^iy^j \mid qi+(q+1)j\le m,\,\,0\le i\le q^2-1,\,\,0\le j\le q-1\},
$$
forms a basis of $\mathcal{L}(mQ_{\infty})$.
For this reason, the Hermitian code $C(m,q)$ could be seen as $\mathrm{Span}_{\FQ}\langle ev_{\mathcal{P}_\He}(\Bmq)\rangle$. Moreover, the dual of $C(m,q)$ denoted by $C(m_\perp,q)=C^\perp(m,q)$ is again an Hermitian code and it is well known (Proposition 8.3.2 of \cite{CGC-cd-book-stich}) that the degree $m$ of the divisor has the following relation with respect to $m_\perp$:
\begin{equation}\label{mperp}
    m_\perp=n+2g-2-m.
\end{equation}
The Hermitian codes can be divided in four phases \cite{CGC-cd-book-AG_HB},
any of them having specific explicit formulas linking their dimension and
their distance \cite{CGC-cd-phdthesis-marcolla}. 
In particular we are interested in the first and the last phase of Hermitian codes, which are:
\begin{description}
    \item[I Phase: $0\le m_\perp\le q^2-2$]. Then we have $m_\perp=aq+b$ where $ 0\le b\le a \le q-1$ and $b\ne q-1$. In this case, the distance is 
    \begin{equation}\label{disI}
     \left\{\begin{array}{lll}
     d=a+1 & if & a>b\\
     d=a+2 & if & a=b.
     \end{array}\right. 
    \end{equation}
    \item[IV Phase: $n-1\le m_\perp \le n+2g-2$]. In this case $m_\perp=n+2g-2-aq-b$ where $a,b$ are integers such that $0\le b\le a\le q-2$ and the distance is 
     \begin{equation}\label{disII}
        d=n-aq-b.
        \end{equation}
\end{description}

\subsection{Bound on distance of Hermitian LRC codes}

Let $\K=\FQ$ be a finite field, where $q$ is a power of a prime. Let $\X=\He$ be the Hermitian curve with affine equation as in \eqref{eqHerm}.
We recall that this curve has  $q^{3}$ $\FQ$-rational affine points plus one at infinity, that we denoted by $Q_\infty$.\\
We consider two of the three constructions of Hermitian LRC codes proposed in \cite{btv} and  we improve the bound on distance of Hermitian LRC codes using properties of Hermitian codes. 
In particular, if we find an Hermitian code $C(m,q)=C_{Her}$ such that $C_{LRC}\subset C_{Her}$, then we have $d_{LRC}\geq d_{Her}$. 
\paragraph{\textbf{Projection on $\mathbf x$}} 
By Proposition 4 of \cite{btv}, we have a family of $(n,k,r)$ Hermitian LRC codes with $r=q-1$, length $n=q^3,$ dimension $k=(t-1)(q-1)$ and distance $d\geq n-tq-(q-2)(q+1)$.
Moreover, for these codes, $S=\K$, $D=tQ_\infty$ for some $1\leq t\leq q^2-1$ and the basis for the vector space $V$ is
\begin{equation}\label{base}
\mathcal B=\{x^jy^i \mid j=0,\ldots, t, \,\, i=0,\ldots,q-2\}.
\end{equation}
Using the Hermitian codes, we improve the bound on the distance for any integer $t$, such that $q^2-q+1\leq t\leq q^2-1$.\\
To find an Hermitian code $C(m,q)=C_{Her}$  such that $C_{LRC}\subset C_{Her}$, we have to compute the set $\Bmq$, that is, we have to find $m$. After that, to compute the distance of  $C(m,q)$ we use \eqref{disI} and \eqref{disII}.\\
We consider the first Hermitian phase: $0\leq m_\perp\leq q^2-2$, that is, $q^2-q+1\leq t\leq q^2-1$.\\
For this phase $m_\perp=aq+b$, where $0\leq b\leq a\leq q-1$ and the distance of the Hermitian code is either $d=  a+1$ if $a>b$ or $d=a+2$ if $a=b$. By \eqref{base}, $m$ must be equal to
$m=qt+(q+1)(q-2)$ and by \eqref{mperp} we have that $m_\perp=n+2g-2-m=q(q^2-t)$. So $b=0$ and  $a=q^2-t$ and the  distance of the Hermitian code is $d_{Her}=a+1=q^2-t+1$, since $a>b$.  This implies that
\begin{equation}\label{boundDist}
d_{LRC}\geq q^2-t+1, \mbox{ for any }  t\geq q^2-q+1.
\end{equation}
Note that  \eqref{boundDist} improves the bound on the distance proposed in Proposition 4 of \cite{btv} since
{\small{$$
q^2-t+1>q^3-tq-(q-2)(q+1) \iff t(q-1)> q(q-1)^2+1 \iff t>q^2-q.
$$}}
We just proved the following:
\begin{proposition}
Let $q^2-q+1\leq t\leq q^2-1$.
It is possible to construct a family of $(n,k,r)$ Hermitian LRC codes 
$\{C_t\}_{q^2-q+1\leq t\leq q^2-1}$ with the following parameters:
$$n=q^3,\, k=(t-1)(q-1),\, r=q-1 \mbox{ and } d\geq q^2-t+1.$$
\end{proposition}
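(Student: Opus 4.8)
The plan is to obtain the parameters $n$, $k$, $r$ directly from the construction of Proposition 4 of \cite{btv} (the projection on $x$), and then to sharpen the bound on the minimum distance by comparing the LRC code with a suitable Hermitian code. The underlying principle is the elementary one already recalled above: if $C_{LRC}\subseteq C_{Her}$ for some Hermitian code $C_{Her}=C(m,q)$, then every nonzero codeword of $C_{LRC}$ is a nonzero codeword of $C_{Her}$, so $d_{LRC}\ge d_{Her}$. Since both codes evaluate at the same point set $\mathcal P_\He$, the actual minimum distance of $C(m,q)$ is a nonincreasing function of $m$; hence the strongest lower bound comes from the smallest $m$ for which the monomial basis $\Bmq$ of $\LL(mQ_\infty)$ contains the generating set $\mathcal B$ of \eqref{base}.

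First I would locate this minimal $m$. Recall that $\Bmq$ consists of the monomials $x^ay^b$ with $qa+(q+1)b\le m$ and $0\le a\le q^2-1$, $0\le b\le q-1$. Every element $x^jy^i$ of $\mathcal B$ meets the index constraints, since $j\le t\le q^2-1$ by hypothesis and $i\le q-2\le q-1$; hence the only binding requirement is on the pole order $qj+(q+1)i$ at $Q_\infty$. This quantity is increasing in both $j$ and $i$, so it is maximized at $(j,i)=(t,q-2)$, giving $qt+(q+1)(q-2)$. Taking $m=qt+(q+1)(q-2)$ therefore guarantees $\mathcal B\subseteq\Bmq$, and consequently $C_{LRC}\subseteq C(m,q)$.

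Next I would pass to the dual degree in order to read off the distance from the explicit first-phase formula. Using \eqref{mperp} with $n=q^3$ and $2g-2=q^2-q-2$, a direct computation gives $m_\perp=q(q^2-t)$. The hypothesis $q^2-q+1\le t\le q^2-1$ forces $1\le q^2-t\le q-1$, hence $q\le m_\perp\le q^2-q\le q^2-2$, placing $C(m,q)$ in the first Hermitian phase. Writing $m_\perp=aq+b$ with $0\le b\le a\le q-1$, I would read off $a=q^2-t$ and $b=0$; since $a\ge 1>0=b$ we are in the case $a>b$ of \eqref{disI}, which yields $d_{Her}=a+1=q^2-t+1$. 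Combining this with the containment gives $d_{LRC}\ge q^2-t+1$, as claimed.

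I expect the main delicate point to be the bookkeeping around the minimal containing Hermitian code: one must check both the index bounds and the pole-order constraint to be sure that $m=qt+(q+1)(q-2)$ is admissible and genuinely minimal, since a larger $m$ would only weaken the bound, and then verify that the resulting $m_\perp$ lands strictly in the first phase with $b=0$, so that the clean formula $d=a+1$ applies rather than the $a=b$ alternative. The parameters $n=q^3$, $k=(t-1)(q-1)$ and $r=q-1$ require no further argument, being exactly those furnished by the construction of \cite{btv}.
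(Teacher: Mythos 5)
Your proposal is correct and follows essentially the same route as the paper: embed $C_{LRC}$ in the Hermitian code $C(m,q)$ with $m=qt+(q+1)(q-2)$ determined by the basis \eqref{base}, compute $m_\perp=q(q^2-t)$ via \eqref{mperp}, observe that the hypothesis on $t$ places the code in the first phase with $a=q^2-t$, $b=0$, and apply \eqref{disI} to get $d_{LRC}\ge d_{Her}=q^2-t+1$. The only addition beyond the paper's argument is your remark on the minimality of $m$, which is a harmless refinement rather than a different method.
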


%--------------------------------------------------------------------
%--------------------------------------------------------------------
%--------------------------------------------------------------------
\paragraph{\textbf{Two recovering sets}}
In \cite{btv} the authors  propose an Hermitian code with two recovering sets of size $r_1=q-1$ and $r_2=q$, denoted by  LRC(2).
They consider
$$L=Span\{x^iy^j,\,\, i=0,\ldots,q-2,\, j=0,\ldots,q-1\}$$ 
and  a linear code $C$ obtained by evaluating the functions in $L$ at the points of $B=g^{-1}(\FQ\backslash M)$, where  $g(x,y)=x$ and $M=\{a\in\Fq \mid a^{q}+ a=0 \}$. So $|B|=q^3-q$.
By Proposition 4.3 of \cite{btv}, the  LRC(2) code has length
$n=(q^2-1)q,$ dimension $k=(q-1)q$ and distance
\begin{equation}\label{dist.LRCxy}
    d\geq (q+1)(q^2-3q+3)=q^3-2q^2+3.
\end{equation}
As before, we improve the bound on the distance using Hermitian codes that contains the LRC(2) code. To do this we have to find $m_\perp$.
By $L$, we have that $m=q(q-1)+(q+1)(q-2)$ so we are in the fourth phase of Hermitian codes because $m_\perp=n+2g-2-m=q^3-q^2+q$. In this case
$d_{Her}=m_\perp-2g+2=q^3+2q+2$. Since $|B|=q^3-q$, we have that
\begin{equation}
d_{LRC}\geq d_{Her}-q = q^3+q+2.
\end{equation}
Note that this bound improves bound \eqref{dist.LRCxy}.
We just proved the following proposition:
\begin{proposition}
Let $C$ be a linear code obtained by evaluating the functions in $L$ at the points of $B$. Then $C$ has the following parameters:
$$n=(q^2-1)q,\, k=(q-1)q,\, r_1=q-1, \, r_2=q \mbox{ and } d\geq q^3+q+2.$$
\end{proposition}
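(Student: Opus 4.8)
The plan is to realize $C$ as a punctured Hermitian code and import a minimum-distance bound from a genuine Hermitian code $C(m,q)$. First I would read off the elementary invariants directly from the construction of \cite{btv}. The length is $n=\sharp B$; reading that construction, $B$ is obtained from the full affine point set $\mathcal{P}_\He$ by discarding the $q$ points at which the second projection degenerates (the points with $x=0$), so $\sharp B=q^3-q=(q^2-1)q$ and, for later use, $\sharp(\mathcal{P}_\He\setminus B)=q$. The dimension is $\dim_{\FQ}L=(q-1)q$ once $ev_B$ is injective on $L$: every nonzero $f\in L$ lies in $\LL(mQ_\infty)$ for $m=q(q-1)+(q+1)(q-2)$, hence has at most $m$ affine zeros, and $m<\sharp B$ for $q\ge 2$, so $ev_B(f)\ne 0$. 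The two localities $r_1=q-1$ and $r_2=q$ come from the two projections $g(x,y)=x$ and $g'(x,y)=y$, whose fibres through the retained points have sizes $q$ and $q+1$ respectively.

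The core is the distance bound, obtained by comparison with the ambient Hermitian code. The key point is the containment $L\subset\LL(mQ_\infty)$: using the pole orders $-v_{Q_\infty}(x)=q$ and $-v_{Q_\infty}(y)=q+1$ at the point at infinity, the monomial $x^iy^j$ occurring in $L$ has pole order $qi+(q+1)j$, and $m$ is the largest such value; by the basis description $\Bmq$ each of these monomials indeed belongs to $\LL(mQ_\infty)$. Consequently $ev_{\mathcal{P}_\He}(f)\in C(m,q)=C_{Her}$ for every $f\in L$, and since $m<q^3$ the map $ev_{\mathcal{P}_\He}$ is injective on $\LL(mQ_\infty)$, so the full evaluation of each nonzero codeword of $C$ is a nonzero codeword of $C_{Her}$, whence of weight at least $d_{Her}$.

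I would then feed $m$ into the phase machinery of Section~\ref{Her_Dist}. Computing $m_\perp=n+2g-2-m$ from \eqref{mperp} places the parameter in the fourth phase, where \eqref{disII} supplies the distance $d_{Her}$ of $C_{Her}$. The final transfer is a puncturing estimate: because $B$ arises from $\mathcal{P}_\He$ by deleting exactly $q$ coordinates, restricting to those positions lowers the Hamming weight of any vector by at most $q$. Hence for nonzero $f\in L$ we have $\mathrm{wt}(ev_B(f))\ge \mathrm{wt}(ev_{\mathcal{P}_\He}(f))-q\ge d_{Her}-q$, and minimising over nonzero codewords yields $d\ge d_{Her}-q$, which is the asserted bound.

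The step I expect to be the main obstacle is the phase identification together with the bookkeeping of $m$ and $m_\perp$. One must verify that $m$ is exactly the minimal integer with $L\subset\LL(mQ_\infty)$, and that the resulting $m_\perp$ genuinely falls in the fourth-phase window $n-1\le m_\perp\le n+2g-2$ so that \eqref{disII} is the applicable formula; an off-by-one in the pole-order count propagates directly through $m_\perp$ into $d_{Her}$ and hence into the final bound, so this arithmetic must be done with care. By contrast, once the containment $ev_{\mathcal{P}_\He}(L)\subset C_{Her}$ and the count $\sharp(\mathcal{P}_\He\setminus B)=q$ are in hand, the puncturing inequality is entirely routine.
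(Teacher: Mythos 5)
Your overall strategy coincides with the paper's: embed $ev_{\mathcal{P}_\He}(L)$ into a one-point Hermitian code $C(m,q)$, read off $d_{Her}$ from the phase formulas, and subtract $q$ for the $q$ deleted positions. The elementary parts (length, injectivity of $ev_B$ on $L$, the puncturing inequality $d\ge d_{Her}-q$) are sound. But the step you yourself single out as the main obstacle --- checking that $m_\perp$ really lands in the fourth-phase window so that \eqref{disII} applies --- is exactly where the argument breaks, and you assert it rather than verify it. With $L=\mathrm{Span}\{x^iy^j : i\le q-2,\ j\le q-1\}$ the maximal pole order is $m=q(q-2)+(q+1)(q-1)=2q^2-2q-1$, so by \eqref{mperp} one gets $m_\perp=q^3+q^2-q-2-m=q^3-q^2+q-1$, which is strictly smaller than $n-1=q^3-1$ for every $q\ge 2$. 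Hence $m_\perp$ is \emph{not} in the fourth phase and \eqref{disII} cannot be invoked. (The paper's own text makes the same unverified phase claim, and moreover evaluates $m_\perp-2g+2$ incorrectly as $q^3+2q+2$ rather than $q^3-2q^2+2q+1$.)

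A sanity check shows the target inequality cannot be rescued by any argument: the code is nonzero of length $q^3-q$, so $d\le q^3-q<q^3+q+2$. What your method legitimately yields, replacing the fourth-phase formula by the designed (Goppa) distance $d_{Her}\ge q^3-m$, is $d\ge q^3-m-q=q^3-2q^2+q+1$, which still improves the bound \eqref{dist.LRCxy} of \cite{btv} for $q>2$ but is far from the stated $q^3+q+2$. So the concrete gap is the unperformed phase/arithmetic verification: once it is carried out, the claimed bound is seen to be unobtainable, and only the weaker (but correct) estimate survives.
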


\section*{Acknowledgements}
The first author is partially supported by MIUR and GNSAGA
of INDAM (Italy).\\

The authors would like to thank the anonymous referees for their  comments.

%==================================================================
%==================================================================
\bibliographystyle{abbrv}

\bibliography{RefsBallico}

\begin{thebibliography}{10}

\bibitem{btv}
A.~Barg, I.~Tamo, and S.~Vl{\u{a}}dut.
\newblock Locally recoverable codes on algebraic curves.
\newblock {\em arXiv preprint arXiv:1501.04904}, 2015.

\bibitem{chen2007secure}
H.~Chen, R.~Cramer, S.~Goldwasser, R.~De~Haan, and V.~Vaikuntanathan.
\newblock Secure computation from random error correcting codes.
\newblock In {\em Advances in Cryptology-EUROCRYPT 2007}, pages 291--310.
  Springer, 2007.

\bibitem{7forbes2014locality}
M.~Forbes and S.~Yekhanin.
\newblock On the locality of codeword symbols in non-linear codes.
\newblock {\em Discrete Mathematics}, 324:78--84, 2014.

\bibitem{forney1994dimension}
G.~D. Forney.
\newblock Dimension/length profiles and trellis complexity of linear block
  codes.
\newblock {\em Information Theory, IEEE Transactions on}, 40(6):1741--1752,
  1994.

\bibitem{CGC-cd-art-Geil-1}
O.~Geil.
\newblock On codes from {N}orm-{T}race curves.
\newblock {\em Finite Fields Appl.}, 9:351--371, 2003.

\bibitem{geil2014relative}
O.~Geil, S.~Martin, R.~Matsumoto, D.~Ruano, and Y.~Luo.
\newblock Relative generalized {H}amming weights of one-point algebraic
  geometric codes.
\newblock {\em Information Theory, IEEE Transactions on}, 60(10):5938--5949,
  2014.

\bibitem{gopalan2011list}
P.~Gopalan, V.~Guruswami, and P.~Raghavendra.
\newblock List decoding tensor products and interleaved codes.
\newblock {\em SIAM Journal on Computing}, 40(5):1432--1462, 2011.

\bibitem{9gopalan2012locality}
P.~Gopalan, C.~Huang, H.~Simitci, and S.~Yekhanin.
\newblock On the locality of codeword symbols.
\newblock {\em Information Theory, IEEE Transactions on}, 58(11):6925--6934,
  2012.

\bibitem{guruswami2003list}
V.~Guruswami.
\newblock List decoding from erasures: {B}ounds and code constructions.
\newblock {\em Information Theory, IEEE Transactions on}, 49(11):2826--2833,
  2003.

\bibitem{heijnen1998generalized}
P.~Heijnen and R.~Pellikaan.
\newblock Generalized {H}amming weights of q-ary {R}eed-{M}uller codes.
\newblock In {\em IEEE Trans. Inform. Theory}. Citeseer, 1998.

\bibitem{CGC-cd-book-AG_HB}
T.~H{\o}holdt, J.~H. van Lint, and R.~Pellikaan.
\newblock Algebraic geometry of codes.
\newblock In V.~S. Pless and W.~Huffman, editors, {\em Handbook of coding
  theory, Vol. I, II}, pages 871--961. North-Holland, 1998.

\bibitem{homma2009second}
M.~Homma and S.~J. Kim.
\newblock The second generalized hamming weight for two-point codes on a
  {H}ermitian curve.
\newblock {\em Designs, Codes and Cryptography}, 50(1):1--40, 2009.

\bibitem{11huang2013pyramid}
C.~Huang, M.~Chen, and J.~Li.
\newblock Pyramid codes: {F}lexible schemes to trade space for access
  efficiency in reliable data storage systems.
\newblock {\em ACM Transactions on Storage (TOS)}, 9(1):3, 2013.

\bibitem{hp}
W.~C. Huffman and V.~Pless.
\newblock {\em Fundamentals of error-correcting codes}.
\newblock Cambridge university press, 2003.

\bibitem{janwa2007generalized}
H.~Janwa and A.~K. Lal.
\newblock On generalized {H}amming weights and the covering radius of linear
  codes.
\newblock In {\em Applied algebra, algebraic algorithms and error-correcting
  codes}, pages 347--356. Springer, 2007.

\bibitem{kasami1993complexity}
T.~Kasami, T.~Takata, T.~Fujiwara, and S.~Lin.
\newblock On complexity of trellis structure of linear block codes.
\newblock {\em Information Theory, IEEE Transactions on}, 39(3):1057--1064,
  1993.

\bibitem{kasami1993optimum}
T.~Kasami, T.~Takata, T.~Fujiwara, and S.~Lin.
\newblock On the optimum bit orders with respect to the state complexity of
  trellis diagrams for binary linear codes.
\newblock {\em Information Theory, IEEE Transactions on}, 39(1):242--245, 1993.

\bibitem{kurihara2011strongly}
J.~Kurihara and T.~Uyematsu.
\newblock Strongly-secure secret sharing based on linear codes can be
  characterized by generalized {H}amming weight.
\newblock In {\em Communication, Control, and Computing (Allerton), 2011 49th
  Annual Allerton Conference on}, pages 951--957. IEEE, 2011.

\bibitem{kurihara2012secret}
J.~Kurihara, T.~Uyematsu, and R.~Matsumoto.
\newblock Secret sharing schemes based on linear codes can be precisely
  characterized by the relative generalized hamming weight.
\newblock {\em IEICE Transactions on Fundamentals of Electronics, Comications
  and Computer Sciences}, 95(11):2067--2075, 2012.

\bibitem{lm}
H.~Lange and G.~Martens.
\newblock On the gonality sequence of an algebraic curve.
\newblock {\em Manuscripta mathematica}, 137(3-4):457--473, 2012.

\bibitem{luo2005some}
Y.~Luo, C.~Mitrpant, A.~J.~H. Vinck, and K.~Chen.
\newblock Some new characters on the wire-tap channel of type {II}.
\newblock {\em Information Theory, IEEE Transactions on}, 51(3):1222--1229,
  2005.

\bibitem{CGC-cd-phdthesis-marcolla}
C.~Marcolla.
\newblock {\em On structure and decoding of {H}ermitian codes}.
\newblock PhD thesis, University of Trento, 2013.

\bibitem{martin2014relative}
S.~Martin and O.~Geil.
\newblock Relative generalized {H}amming weights of q-ary {R}eed-{M}uller
  codes.
\newblock {\em arXiv preprint arXiv:1407.6185}, 2014.

\bibitem{munuera1994generalized}
C.~Munuera.
\newblock On the generalized {H}amming weights of geometric {G}oppa codes.
\newblock {\em IEEE transactions on information theory}, 40(6):2092--2099,
  1994.

\bibitem{munuera1999second}
C.~Munuera and D.~Ramirez.
\newblock The second and third generalized {H}amming weights of {H}ermitian
  codes.
\newblock {\em Information Theory, IEEE Transactions on}, 45(2):709--712, 1999.

\bibitem{ngai2011network}
C.-K. Ngai, R.~W. Yeung, and Z.~Zhang.
\newblock Network generalized hamming weight.
\newblock {\em Information Theory, IEEE Transactions on}, 57(2):1136--1143,
  2011.

\bibitem{olaya2015second}
W.~Olaya-Le{\'o}n and C.~Granados-Pinz{\'o}n.
\newblock The second generalized hamming weight of certain {C}astle codes.
\newblock {\em Designs, Codes and Cryptography}, 76(1):81--87, 2015.

\bibitem{3papailiopoulos2014locally}
D.~S. Papailiopoulos and A.~G. Dimakis.
\newblock Locally repairable codes.
\newblock {\em Information Theory, IEEE Transactions on}, 60(10):5843--5855,
  2014.

\bibitem{p}
R.~Pellikaan.
\newblock On the gonality of curves, abundant codes and decoding.
\newblock In {\em Coding theory and algebraic geometry}, pages 132--144.
  Springer, 1992.

\bibitem{p2}
B.~Poonen.
\newblock Gonality of modular curves in characteristic $ p$.
\newblock {\em Mathematical Research Letters}, 14(4):691--701, 2007.

\bibitem{CGC-alg-art-rucsti94}
H.~G. Ruck and H.~Stichtenoth.
\newblock A characterization of {H}ermitian function fields over finite fields.
\newblock {\em Journal fur die Reine und Angewandte Mathematik}, 457:185--188,
  1994.

\bibitem{7Dsilberstein2013optimal}
N.~Silberstein, A.~S. Rawat, O.~O. Koyluoglu, and S.~Vishwanath.
\newblock Optimal locally repairable codes via rank-metric codes.
\newblock In {\em Information Theory Proceedings (ISIT), 2013 IEEE
  International Symposium on}, pages 1819--1823. IEEE, 2013.

\bibitem{CGC-cd-book-stich}
H.~Stichtenoth.
\newblock {\em Algebraic function fields and codes}.
\newblock Universitext. Springer-Verlag, Berlin, 1993.

\bibitem{9Dtamo2014family}
I.~Tamo and A.~Barg.
\newblock A family of optimal locally recoverable codes.
\newblock {\em Information Theory, IEEE Transactions on}, 60(8):4661--4676,
  2014.

\bibitem{11Dtamo2013optimal}
I.~Tamo, D.~S. Papailiopoulos, and A.~G. Dimakis.
\newblock Optimal locally repairable codes and connections to matroid theory.
\newblock In {\em Information Theory Proceedings (ISIT), 2013 IEEE
  International Symposium on}, pages 1814--1818. IEEE, 2013.

\bibitem{tvn}
M.~Tsfasman, S.~Vl{\u{a}}dut, and D.~Nogin.
\newblock {\em Algebraic geometric codes: basic notions}, volume 139.
\newblock American Mathematical Soc., 1990.

\bibitem{wei1991generalized}
V.~K. Wei.
\newblock Generalized {H}amming weights for linear codes.
\newblock {\em Information Theory, IEEE Transactions on}, 37(5):1412--1418,
  1991.

\bibitem{yang1994weight}
K.~Yang, P.~V. Kumar, and H.~Stichtenoth.
\newblock On the weight hierarchy of geometric {G}oppa codes.
\newblock {\em Information Theory, IEEE Transactions on}, 40(3):913--920, 1994.

\end{thebibliography}
\end{document}